\documentclass[12pt, a4paper, reqno]{amsart}

\usepackage{hyperref}

\usepackage{graphicx}
\usepackage{color}
\usepackage{enumerate}
\usepackage{amssymb}
\usepackage{overpic}
\newtheorem{lemma}{Lemma}[section]
\newtheorem{corollary}[lemma]{Corollary}
\newtheorem{theorem}[lemma]{Theorem}
\newtheorem{proposition}[lemma]{Proposition}
\newtheorem{remark}[lemma]{Remark}

\addtolength{\textwidth}{2cm}
\addtolength{\oddsidemargin}{-1cm}
\addtolength{\evensidemargin}{-1cm}
\addtolength{\textheight}{1cm}
\addtolength{\topmargin}{-0.5cm}

\author[A. Gasull]{A. Gasull$^*$}
\address{Dept. de Matem\`{a}tiques.
Universitat Aut\`{o}noma de Barcelona. Edifici C. 08193 Bellaterra, Barcelona. Spain}
\email{gasull@mat.uab.cat}

\author{H. Giacomini}
\address{Laboratoire de Math\'{e}matiques et Physique Th\'{e}orique. Facult\'{e} des
Sciences et Techniques. Universit\'{e} de Tours. 37200 Tours. France}
\email{Hector.Giacomini@lmpt.univ-tours.fr}

\author[J. Torregrosa]{J. Torregrosa$^*$}
\address{Dept. de Matem\`{a}tiques \\
Universitat Aut\`{o}noma de Barcelona \\ Edifici C. 08193 Bellaterra, Barcelona.
Spain} \email{torre@mat.uab.cat}

\thanks{$^*$\uppercase{T}he first and third authors are partially supported by
 the MCYT/FEDER grant number MTM2008-03437 by the CIRIT grant number 2009SGR410.}

\subjclass[2010]{34C37, 35C07, 37C29} \keywords{Traveling wave, heteroclinic
orbit, Fisher-Kolmogorov equation, reaction-diffusion partial differential
equation, invariant manifold}

\begin{document}

\title[Explicit bounds for the traveling wave solutions]
{Explicit upper and lower bounds\\ for the traveling wave solutions\\ of Fisher-Kolmogorov type equations}
\begin{abstract} It is well-known that the existence of traveling
wave solutions for reaction-diffusion partial differential equations
can be proved by showing the existence of certain heteroclinic orbits for related
autonomous planar differential equations. We introduce a method for finding
explicit upper and lower bounds of these heteroclinic orbits. In particular, for
the classical Fisher-Kolmogorov equation we give rational upper and lower bounds
which allow to locate these solutions analytically and with very high
accuracy.
\end{abstract}
\maketitle
\date{November 2011}

\section{Introduction and Main Results}\label{se:1}

Consider the adimensionalized reaction-diffusion partial differential equation
\begin{equation}\label{eq:1}
u_t=u_{xx}+ f(u),
\end{equation}
of Fisher-Kolmogorov type, where $f(u)$ is a smooth function satisfying certain
hypotheses.

The usual Fisher-Kolmogorov equation corresponds to $f(u)=u(1-u)$
and models the spreading of biological populations, see \cite{Fisher, Kolmo}.
Other well-known cases are the Newell-Whitehead-Segel equation, $f(u) = u(1-u^2),$
for describing Rayleigh-Benard convection, see \cite{NewWhi,Seg}, and the
Zeldovich equation, $f(u) = u(1 - u)(u - \alpha)$ with $0 < \alpha < 1,$
that appears in combustion theory, see \cite{ZelFra}. See also \cite{Gri,SanMai1995,SanMai1997,Xin}.

It is known that the traveling wave solutions
$u=u(x-ct)$ of \eqref{eq:1}, satisfying
\[
\lim_{s\to-\infty} u(s)=0\quad\mbox{and}\quad \lim_{s\to\infty} u(s)=1,
\]
appear when $u(s)$ is a special solution of the second order equation
\[
\ddot u+c\dot u+f(u)=0,
\]
where the dot indicates the derivative with respect to $s$. This
solution can be seen as the heteroclinic orbit $H_c$ of the planar system
\begin{equation}\label{eq:2}
\left\{
\begin{array}{ccl}
\dot u&=&v, \\
\dot v&=&-c v -f(u),
\end{array}
\right.
\end{equation}
that connects the origin with the saddle point at $(u,v)=(1,0)$, see
Figure~\ref{nfig1}. For instance for the classical Fisher-Kolmogorov equation it
exists only when $c\ge2.$ 

\begin{figure}[h]
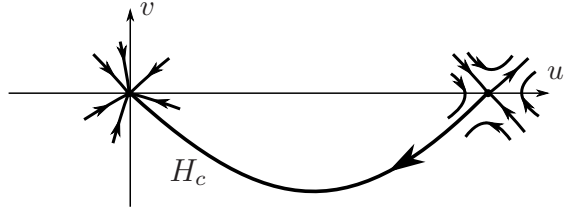

\begin{center}
 \begin{overpic}{nfig1}
 \put(30,5){$H_{c}$}
 \put(100,24){\small $u$}
 \put(24.5,36){\small $v$}
 \end{overpic}
\end{center}
\vspace{5mm} \caption{Phase portrait showing the unstable separatrix
$H_c$.}\label{nfig1}
\end{figure}

The goal of this paper is to give analytic upper and lower bounds of the heteroclinic orbit $H_c$ as well as of their time parametrization. We will approach to this question with similar tools to those introduced in \cite{GGT}. A key point consists in using the local behavior of the separatrices of the critical points to guess global algebraic bounds for the actual orbits.

First we prove a general result for system~\eqref{eq:2}. It is illustrated in Figure~\ref{nfig2}.

\begin{theorem}\label{thm:1.1} Consider system~\eqref{eq:2}, with $f$ satisfying $f(0)=f(1)=0,$ $f'(0)>0,$ $f'(1) <0,$ $f''(u)<0$ for all $u\in\mathbb{R}$ and $c\ge 2\sqrt{f'(0)}$. Let $H_c$ be its heteroclinic orbit and define
\[
\underline\lambda:=\dfrac{c-\sqrt{c^2-4f'(0)}}{2f'(0)}\quad\mbox{and}\quad
\overline\lambda:=\dfrac{c-\sqrt{c^2-4f'(1)}}{2f'(1)}.
\]
Then $H_c$ can be parametrized as $ H_c=\{(u,h_c(u)),\, u\in[0,1]\}$ and for
all $u\in(0,1),$
\[
-\underline{\lambda}\, f(u)< h_c(u)<-\overline {\lambda}\, f(u)<0.
\]
Moreover, if
 $(u_c(s),v_c(s))$ is the parametrization of $H_c$
such that $u_c(0)=1/2$ it holds that
\[
u_c(s)\in\left\langle z_{\underline \lambda}(s),
z_{\overline\lambda}(s)\right\rangle,
\]
where $\langle a,b\rangle$ denotes the smallest closed interval containing $a$ and
$b$ and $u=z_\lambda(s)$ is the solution of the Cauchy problem
\begin{equation*}
\left\{
\begin{array}{l}
\dfrac{du}{ds}=-\lambda f(1-u), \\[0.2cm]
u(0)=1/2.
\end{array}
\right.
\end{equation*}
\end{theorem}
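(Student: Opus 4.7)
The plan is to proceed in three steps: show that $H_c$ is the graph $v=h_c(u)$ of a function negative on $(0,1)$; obtain the pointwise bounds on $h_c$ by exhibiting the curves $v=-\underline\lambda f(u)$ and $v=-\overline\lambda f(u)$ as barriers for the flow of \eqref{eq:2}; and derive the inclusion for $u_c$ from a standard scalar comparison of autonomous ODEs.

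For the graph property, the hypotheses $f(0)=f(1)=0$ and $f''<0$ force $f>0$ on $(0,1)$. Near the saddle, $H_c$ leaves along the unstable eigendirection with positive slope $\lambda_u:=\tfrac12\bigl(-c+\sqrt{c^2-4f'(1)}\bigr)$, so $v<0$ just to the left of $u=1$; analogously $v<0$ just to the right of $u=0$. A first instant $s_0$ at which $v(s_0)=0$ with $u(s_0)\in(0,1)$ would force $\dot v(s_0)\ge 0$ (as $v<0$ immediately before $s_0$), contradicting $\dot v(s_0)=-f(u(s_0))<0$. Hence $v<0$ on $H_c$ throughout $(0,1)$, and $H_c$ is the graph of a function $h_c$ vanishing only at the endpoints.

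The core of the argument for the pointwise bounds is the identity, obtained by plugging \eqref{eq:2} into the derivative of $v+\lambda f(u)$ and restricting to the curve $\Gamma_\lambda:=\{v=-\lambda f(u)\}$:
\[
\left.\frac{d}{ds}\bigl(v+\lambda f(u)\bigr)\right|_{v=-\lambda f(u)}=f(u)\,\bigl[c\lambda-1-\lambda^2 f'(u)\bigr].
\]
The definitions of $\underline\lambda$ and $\overline\lambda$ are equivalent to the quadratic relations $\underline\lambda^2 f'(0)-c\underline\lambda+1=0$ and $\overline\lambda^2 f'(1)-c\overline\lambda+1=0$, so the bracket vanishes at $u=0$ when $\lambda=\underline\lambda$ and at $u=1$ when $\lambda=\overline\lambda$. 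Since $f'$ is strictly decreasing, the bracket is strictly positive on $(0,1]$ in the first case and strictly negative on $[0,1)$ in the second; hence the flow crosses $\Gamma_{\underline\lambda}$ strictly upward and $\Gamma_{\overline\lambda}$ strictly downward for $u\in(0,1)$, which makes the strip $\{-\underline\lambda f(u)<v<-\overline\lambda f(u)\}$ positively invariant. It remains to check that $H_c$ enters this strip. At $u=1$ the heteroclinic is tangent to $\overline g:=-\overline\lambda f$ (both have slope $\lambda_u=-\overline\lambda f'(1)$), and a routine second-order expansion of $h_c$ at $u=1$, combined with the quadratic relation for $\overline\lambda$, gives $h_c''(1)<\overline g''(1)$, whence $h_c<\overline g$ strictly just to the left. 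Moreover $\underline\lambda>\overline\lambda$ (directly from the formulas, since $f'(0)>0>f'(1)$ makes the denominator of $\underline\lambda$ smaller), so the slope $-\underline\lambda f'(1)$ of $\underline g:=-\underline\lambda f$ at $u=1$ exceeds $\lambda_u$ and thus $h_c>\underline g$ just to the left of $u=1$. Positive invariance then propagates the strict bounds to all of $(0,1)$.

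For the parametrization, the pointwise bounds become $-\underline\lambda f(u_c(s))<\dot u_c(s)<-\overline\lambda f(u_c(s))$, and a standard monotone comparison with the two autonomous scalar ODEs defining $z_{\underline\lambda}$ and $z_{\overline\lambda}$, all three starting at $1/2$ at $s=0$, places $u_c(s)$ between $z_{\underline\lambda}(s)$ and $z_{\overline\lambda}(s)$; the order of the bracketing solutions swaps with the sign of $s$, which is exactly why the smallest closed interval $\langle\cdot,\cdot\rangle$ is used. The main obstacle is the first-order tangency of $H_c$ with $\overline g$ at $u=1$: invariance alone does not place $H_c$ strictly inside the strip at the saddle, so a short local expansion is unavoidable; this is elementary here because $c+3\lambda_u>0$ and the expansion is regular. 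In contrast, no local expansion at $u=0$ is required, which is fortunate in the borderline case $c=2\sqrt{f'(0)}$: there the node is degenerate and $h_c$ picks up logarithmic corrections at $u=0$, but invariance propagates the bound from any entry point in $(0,1)$.
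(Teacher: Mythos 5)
Your first two steps are correct and follow essentially the paper's own route (Proposition 2.1 there, written in the shifted variables $x=1-u$, $y=v$): your identity $\tfrac{d}{ds}\bigl(v+\lambda f(u)\bigr)\big|_{v=-\lambda f(u)}=f(u)\bigl[c\lambda-1-\lambda^2 f'(u)\bigr]$ is exactly the paper's $g(x)N_\lambda(x)$, and the monotonicity of the bracket coming from $f''<0$ is the paper's monotonicity of $N_\lambda$. Your second-order expansion at $u=1$, deciding on which side of the tangent curve $v=-\overline\lambda f(u)$ the separatrix leaves the saddle (the comparison reduces to $c+3\lambda_u$ versus $c+\lambda_u$), is correct and is in fact more careful than the paper, which only remarks the tangency and asserts the containment; the first-order argument at the lower curve and the propagation by the strict crossing direction are also fine.

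The genuine gap is in the last step. From $-\underline\lambda f(u_c(s))<\dot u_c(s)<-\overline\lambda f(u_c(s))$ a scalar comparison argument can only bracket $u_c$ by solutions of equations with the \emph{same} right-hand side, namely $u'=-\lambda f(u)$, $u(0)=1/2$, with $\lambda=\underline\lambda,\overline\lambda$; it says nothing about the functions $z_\lambda$ of the statement, which solve $u'=-\lambda f(1-u)$. These ODEs differ unless $f(1-u)\equiv f(u)$, and for asymmetric $f$ the bracketing by the statement's $z_\lambda$ actually fails: as $s\to-\infty$ one has $1-u_c(s)\sim C e^{\lambda_u s}$ with $\lambda_u=\bigl(-c+\sqrt{c^2-4f'(1)}\bigr)/2$, while $1-z_{\overline\lambda}(s)$ and $1-z_{\underline\lambda}(s)$ decay like $e^{\overline\lambda f'(0)s}$ and $e^{\underline\lambda f'(0)s}$; choosing, say, $f'(0)=1$, $c=2$ and $f'(1)$ close to $0$ makes $\lambda_u$ strictly smaller than both exponents, so $u_c(s)$ eventually lies below both $z_{\underline\lambda}(s)$ and $z_{\overline\lambda}(s)$. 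The printed Cauchy problem is evidently a slip in the statement: the paper's own proof (Theorem 2.2) compares with $w_\lambda$ solving $x'=-\lambda g(x)$, $g(x)=-f(1-x)$, and translating back, the bounding functions $1-w_\lambda$ solve $u'=-\lambda f(u)$; in the Fisher--Kolmogorov case $f(1-u)=f(u)$, so the discrepancy is invisible there. Your comparison is correct once $f(1-u)$ is replaced by $f(u)$ in the defining ODE, but as written the appeal to a ``standard monotone comparison'' with the statement's $z_\lambda$ is precisely the step that does not go through, and the mismatch should have been detected and flagged.
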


\begin{figure}[h]
\begin{center}
 \begin{tabular}{cc}
 \begin{overpic}[height=3cm]{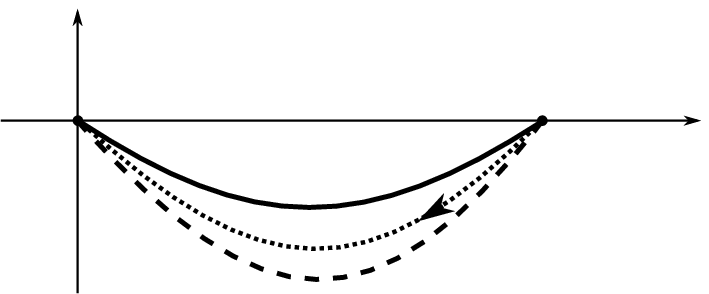}
 \put(33,18){\small $v=-\overline\lambda\, f(u)$}
 \put(62,3){\small $v=-\underline\lambda\, f(u)$}
 \put(94,27){\small$u$}
 \put(13,39){\small$v$}
 \put(30,-7){\small$(i)\quad \mbox{{\tiny Phase plane}}$}
 \end{overpic}
\vspace{0.3cm}&
 \begin{overpic}[height=3cm]{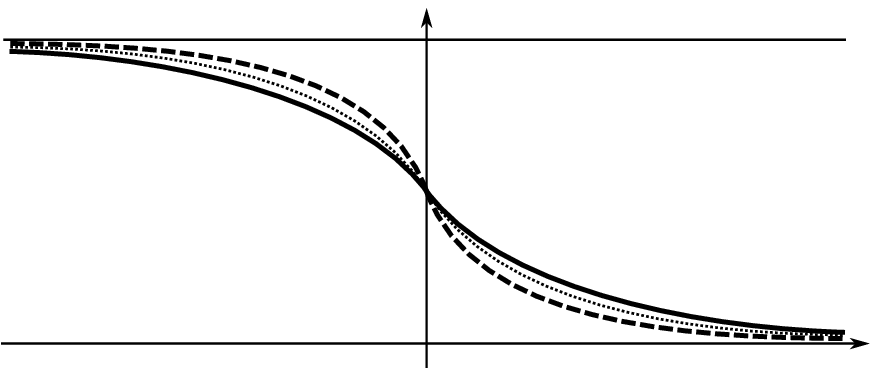}
 \put(35,18){\small$\left(0,\frac{1}{2}\right)$}
 \put(78,10){\small$u=z_{\overline\lambda}(s)$}
 \put(39.5,33){\small$u=z_{\underline\lambda}(s)$}
 \put(85,33.5){\small$u=1$}
 \put(95,-2){\small$s$}
 \put(51,40){\small$u$}
 \put(30,-7){\small$(ii)\quad \mbox{{\tiny Time parametrization}}$}
 \end{overpic}
 \end{tabular}
\end{center}
\caption{(i) Upper and lower bounds of the unstable separatrix $H_c$ (dotted line).
\,(ii) Bounds for the time parametrization of $H_c$.}\label{nfig2}
\end{figure}

For instance when we particularize the above theorem to the classical
Fisher-Kolmogorov system we obtain that
\begin{equation}\label{eq:3}
u_c(s)\in\left\langle \frac1{1+e^{\underline{\lambda}s}},
\frac1{1+e^{\overline{\lambda}s}}\right\rangle.
\end{equation}
The results obtained for system~\eqref{eq:2} can be improved when we study this case. We prove:

\begin{theorem}\label{thm:1.2} Let
$ H_c=\{(u,h^c(u)),\, u\in[0,1]\}$ be a parametrization of the heteroclinic
solution of system~\eqref{eq:2} when $f(u)=u(1-u)$. For $u\in(0,1)$ and $ c\ge2$
it holds that
\[
h^c_{2}(u)< h^c_{3}(u)<\cdots<
h^c_{100}(u)<h^c(u)<R^c_{10}(u)<R^c_9(u)<\cdots<R^c_1(u),
\]
where $h^c_n(u)$ is the Taylor polynomial of degree $n$ of the unstable separatrix
of the saddle point $(1,0)$ of the system and each $R^c_{m}$ is a rational
function whose numerator has degree $m+2$ and its denominator degree $m$,
constructed from the Pad\'{e} approximants of $v=h^c_{2m+2}(u)$. See
Figure~\ref{nfig3}.
\end{theorem}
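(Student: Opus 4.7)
The plan is to reduce every inequality in the chain to a sign condition on certain polynomials, via a single comparison lemma. On its domain $[0,1]$, the separatrix $v=h^c(u)$ satisfies the implicit ODE
\[
h^c(u)\,(h^c)'(u)+c\,h^c(u)+u(1-u)=0,
\]
with $h^c(1)=0$ and slope $(h^c)'(1)=\overline\lambda=(\sqrt{c^2+4}-c)/2$ as in Theorem~\ref{thm:1.1}. For any smooth $\phi$ with $\phi(1)=0$, $\phi'(1)=\overline\lambda$ and $\phi<0$ on $(0,1)$, define the defect
\[
\Delta[\phi](u):=\phi(u)\phi'(u)+c\,\phi(u)+u(1-u).
\]
At any hypothetical crossing $u_0\in(0,1)$ with $\phi(u_0)=h^c(u_0)<0$, subtracting the two identities yields
\[
\phi'(u_0)-(h^c)'(u_0)=\frac{\Delta[\phi](u_0)}{h^c(u_0)},
\]
so a strict sign of $\Delta[\phi]$ on $(0,1)$ forbids $\phi$ from crossing $h^c$ in one of the two possible directions. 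Combining this with a local inequality between $\phi$ and $h^c$ just to the left of $u=1$ (read off from the next Taylor term after $\phi$ terminates) propagates the inequality globally.

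For the Taylor bounds I would compute $h^c_n(u)=\sum_{k=1}^n a_k(u-1)^k$ from the recursion obtained by substituting into the ODE; each $a_k$ is an explicit rational function of $c$ and $\overline\lambda$ with $a_1=\overline\lambda$ and $a_2=1/(3\overline\lambda+c)$. The defect $\Delta[h^c_n]$ is a polynomial of degree at most $2n-1$ that necessarily vanishes to order at least $n+1$ at $u=1$, hence factors as $(u-1)^{n+1}q_n(u)$ with $\deg q_n\le n-2$. To conclude $h^c_n<h^c$ on $(0,1)$ it then suffices to check that $(u-1)^{n+1}q_n(u)<0$ for $u\in(0,1)$ and that the sign of $a_{n+1}(u-1)^{n+1}$ is correct near $u=1$. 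The monotonicity $h^c_n<h^c_{n+1}$ reduces to exactly the same term $a_{n+1}(u-1)^{n+1}$, so it becomes a statement about the sign pattern of the Taylor coefficients.

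For the Padé bounds, since the approximant $R^c_m$ of type $[m+2/m]$ agrees with $h^c$ to order $2m+2$ at $u=1$, the numerator of $\Delta[R^c_m]$ (after clearing the denominator of $R^c_m$) is divisible by $(u-1)^{2m+3}$. Positivity of the remaining factor on $(0,1)$ for $c\ge 2$ combined with the comparison lemma yields $R^c_m>h^c$. The nesting $R^c_{m+1}<R^c_m$ can be obtained by a parallel defect analysis for the difference $R^c_m-R^c_{m+1}$, whose leading Taylor coefficient at $u=1$ dictates the sign near the boundary while the same comparison argument rules out crossings in the interior.

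The main obstacle is that no uniform induction on $n$ or $m$ appears to be available: the coefficients $a_k$ and the Padé numerators and denominators involve nested algebraic expressions in $\overline\lambda$ and $c$, so the quotient polynomials $q_n$ and their Padé analogues depend nontrivially on the index. I would therefore perform the sign verification case by case with computer algebra. Replacing $\overline\lambda$ by a new variable subject to $\overline\lambda^2+c\overline\lambda-1=0$, each candidate polynomial has coefficients in $\Q[c,\overline\lambda]$; one first extracts transparent factors such as $u$ and $(1-u)^{2}$, and then certifies the sign of the residual polynomial on $(0,1)\times[2,\infty)$ via Sturm sequences in $u$ with parametric analysis in $c$, or via an explicit sum-of-squares decomposition. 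The limits $n\le 100$ and $m\le 10$ in the statement presumably reflect the practical reach of this symbolic verification rather than any theoretical barrier.
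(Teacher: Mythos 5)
Your proposal is essentially the paper's own argument: your defect $\Delta[\phi]$ is exactly (up to sign) the transversality function $\left\langle \nabla(v-\phi),Z\right\rangle$ that the paper evaluates along each Taylor polynomial and each Pad\'e-based curve, your relation $\overline\lambda^2+c\,\overline\lambda-1=0$ is the paper's rationalizing substitution $c=1/r-r$, and in both cases the resulting polynomial sign conditions are certified case by case with computer algebra, which is precisely why the chain stops at $n=100$ and $m=10$. The only real differences lie in the certificates: the paper obtains the monotone Taylor chain uniformly from an induction showing all series coefficients of the separatrix at the saddle are positive, and proves positivity of the Pad\'e numerators and denominators via discriminants in $r$ with positive coefficients plus a single Sturm check at $r=1/10$, whereas you propose generic parametric Sturm or sum-of-squares verifications (and you misstate the vanishing order of the Pad\'e defect as $2m+3$ instead of $2m+2$, which is harmless).
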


\begin{figure}[h]
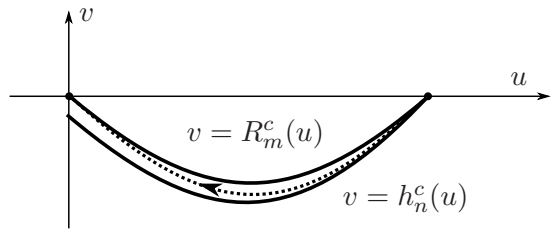

\begin{center}
 \begin{overpic}{nfig3}
 \put(33,16.5){\small $v=R_m^c(u)$}
 \put(62,4.5){\small $v=h_n^c(u)$}
 \put(93,27){\small $u$}
 \put(13,39){\small $v$}
 \end{overpic}
\end{center}
\vspace{5mm}
\caption{Upper and lower bounds of the separatrix $H_c$ of the Fisher-Kolmogorov equation, which is plotted as a dotted line.}\label{nfig3}
\end{figure}

For a given value of $c$ it is also possible to study the maximum distance between
two of the above functions. For instance when $c=99/100$, we can prove that
$R^{99/100}_9(x)-h^{99/100}_{20}(x)<2\times 10^{-19}.$ It is also possible to go
further in the computations. For example we get that
\[
h^{99/100}_{82}(u)< h^{99/100}(u)<R^{99/100}_{40}(u)\quad \mbox{and}\quad
R^{99/100}_{40}(u)-h^{99/100}_{82}(u)<2\times 10^{-47},
\]
for all $u\in[0,1]$ and the maximum error is at $u=0.$ For bigger values of $c$ it
is needed to compute approximations of higher degree to arrive to similar bounds
of the error.

The $s$-parametrization of $H_c$ can also be obtained with more accuracy for the
classical Fisher-Kolmogorov case. We only present here a first
result. Sharper approximations are detailed in Section~\ref{se:4}.

\begin{theorem}\label{thm:1.3}
Let $(u^c(s),v^c(s))$ be the time-parametrization of the heteroclinic orbit $H^c$
of system~\eqref{eq:2} with $f(u)=u(1-u)$ and such that $u^c(0)=1/2$. Define
\[
U^c(s)=\dfrac{3+2\sqrt2}{\left(1+\sqrt2+ e^{(\sqrt{c^2+4}-c)s/2}\right)^2}.
\]
Then:\begin{enumerate}[(i)]
\item When $2\le c<5/\sqrt{6}$ it holds that $\operatorname{sgn}(u^c(s)-U^c(s))=-\operatorname{sgn}(s).$
\item When $c=5/\sqrt{6}$ it holds that $u^c(s)=U^c(s).$
\item When $c>5/\sqrt{6}$ it holds that $\operatorname{sgn}(u^c(s)-U^c(s))=\operatorname{sgn}(s).$
\end{enumerate}
\end{theorem}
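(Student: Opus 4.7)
The plan is to reduce the theorem to a phase-plane comparison between the heteroclinic $H^c$ and the parametric curve $\Gamma^c:=\{(U^c(s),\dot U^c(s)):s\in\mathbb{R}\}$. The starting point is to plug $U^c=(1+y)^{-2}$ with $y:=(\sqrt 2-1)e^{ks}$ and $k:=(\sqrt{c^2+4}-c)/2$ (the positive saddle eigenvalue at $(1,0)$) into the wave ODE. A direct computation, using that $k$ solves $k^2+ck-1=0$, gives
\[
\Phi(s):=\ddot U^c+c\dot U^c+U^c(1-U^c)=\frac{(5-6ck)\,y^2}{(1+y)^4}.
\]
The scalar $ck=c(\sqrt{c^2+4}-c)/2$ is strictly increasing in $c$ on $[2,\infty)$ (its derivative reduces to $(c^2+2-c\sqrt{c^2+4})/\sqrt{c^2+4}>0$) and equals $5/6$ precisely at $c=5/\sqrt 6$; hence $\Phi$ has constant sign $\operatorname{sgn}(5/\sqrt 6-c)$ on $\mathbb{R}$. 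In particular $\Phi\equiv 0$ when $c=5/\sqrt 6$, and since $U^c(0)=1/2$ with the correct heteroclinic limits, uniqueness of $H^c$ gives $u^c=U^c$, proving part~(ii).

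For $c\ne 5/\sqrt 6$ I pass to the phase plane. Since $\dot U^c<0$, the curve $\Gamma^c$ is the graph of a function $G:(0,1)\to\mathbb{R}$ defined by $G(u):=\dot U^c\bigl((U^c)^{-1}(u)\bigr)$; the heteroclinic is the graph of $h^c$ from Theorem~\ref{thm:1.1}. Differentiating $G(U^c(s))=\dot U^c(s)$ and using the residual identity one obtains the Abel-type equations
\[
h^c(h^c)'=-c\,h^c-u(1-u),\qquad G\,G'=-c\,G-u(1-u)+\widetilde\Phi(u),
\]
where $\widetilde\Phi(u):=\Phi\bigl((U^c)^{-1}(u)\bigr)$ inherits the sign of $\Phi$. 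Set $\Delta:=h^c-G$. If $\Delta(u_*)=0$ at some $u_*\in(0,1)$ with common value $v_*<0$, subtracting the two equations gives $v_*\Delta'(u_*)=-\widetilde\Phi(u_*)$, so $\Delta'(u_*)$ has the sign of $\widetilde\Phi(u_*)$. Hence every interior zero of $\Delta$ is a simple crossing, from negative to positive when $c<5/\sqrt 6$ and from positive to negative when $c>5/\sqrt 6$.

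To rule out interior zeros I study the endpoints. Near $u=1$, expanding $h^c(u)=k(u-1)+a(u-1)^2+O((u-1)^3)$ and $G(u)=k(u-1)+b(u-1)^2+O((u-1)^3)$ and matching coefficients in the two Abel equations produces $a=1/(3k+c)$ and $b=(9-6ck)/(4(3k+c))$, so
\[
\Delta(u)=\frac{6ck-5}{4(3k+c)}(u-1)^2+O\bigl((u-1)^3\bigr),
\]
which has the sign of $c-5/\sqrt 6$ near $u=1$. At the stable node $u=0$ the heteroclinic approaches along the slow eigendirection giving $h^c(u)/u\to\lambda_+:=(-c+\sqrt{c^2-4})/2$, while the explicit form of $U^c$ yields $G(u)/u\to-2k$; the inequality $\lambda_++2k>0$ reduces, after isolating square roots and squaring twice, to $c^2>25/6$, so $\Delta(u)/u$ also has the sign of $c-5/\sqrt 6$ near $u=0$. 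Since $\Delta$ has the same sign at both ends of $(0,1)$ and any interior zero would force a sign change in the forbidden direction at the extremal such zero, $\Delta$ has constant sign $\operatorname{sgn}(c-5/\sqrt 6)$ on $(0,1)$.

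Finally I lift this to a comparison of $u^c$ and $U^c$. The function $w(s):=u^c(s)-U^c(s)$ vanishes at $s=0$ with $\dot w(0)=h^c(1/2)-G(1/2)=\Delta(1/2)$ having the sign of $c-5/\sqrt 6$. If $w$ had a further zero $s_0\ne 0$, evaluating at the common point $u_0:=u^c(s_0)\in(0,1)$ would give $\dot w(s_0)=\Delta(u_0)$ again with the sign of $c-5/\sqrt 6$, contradicting the sign of $w$ just on the near side of $s_0$ inherited from $s=0$. Therefore $\operatorname{sgn}(w(s))=\operatorname{sgn}(s)\cdot\operatorname{sgn}(c-5/\sqrt 6)$ for $s\ne 0$, which is exactly parts~(i) and~(iii). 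The main technical point I anticipate is the degenerate case $c=2$, where $\lambda_+=-1$ is a repeated eigenvalue and the expansion of $h^c$ at $u=0$ carries a logarithmic correction; one still has $h^c(u)/u\to -1$, so the comparison with $G(u)/u\to 2-2\sqrt 2$ goes through and the endpoint sign analysis survives.
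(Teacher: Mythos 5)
Your argument is correct, and its engine is the same computation that drives the paper's proof: Section~4 works in the variables $x=1-u$ with $r=(\sqrt{c^2+4}-c)/2$ and, writing $\Phi=e^{rt}$, arrives at $M(t)=2(17+12\sqrt2)\,r(1-6r^2)\Phi^3/(1+\sqrt2+\Phi)^7$, which coincides, up to the positive factor $X'(t)$ and the sign flip coming from $u=1-x$, with your residual $\ddot U^c+c\dot U^c+U^c(1-U^c)=(5-6ck)y^2/(1+y)^4$ (indeed $5-6ck=-(1-6r^2)$). Where you genuinely differ is in how this one-signedness is converted into the statement: the paper (implicitly, in the style of Proposition~\ref{prop:2.1} and Theorem~\ref{thm:2.2}) treats $\{(X(t),-X'(t))\}$ as a curve without contact joining the two equilibria, traps the separatrix on one side of it in the phase plane, and then compares time parametrizations via the differential inequality with the common value $1/2$ at $t=0$; you instead compare the two graphs through the Abel equations, force the crossing direction at any coincidence point, fix the sign near the saddle by a second-order expansion ($a=1/(3k+c)$, $b=(9-6ck)/(4(3k+c))$, both correct), and finish with a zero-counting argument for $w(s)=u^c(s)-U^c(s)$. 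Both completions are legitimate, and yours makes explicit what the paper compresses into ``in short we have proved''. The one soft spot is your endpoint analysis at $u=0$: the assertion that the heteroclinic enters the node along the slow eigendirection is exactly what you need when $2\le c<5/\sqrt6$ (Theorem~\ref{thm:1.1} only yields $\liminf_{u\to0}h^c(u)/u\ge\lambda_+$, which rules out the fast direction but must be combined with the standard tangency dichotomy at a node to get the limit $\lambda_+<-2k$), and you state it without justification. The fact is true and standard, but note it is also dispensable: since every interior zero of $\Delta$ must be crossed in the direction dictated by $\operatorname{sgn}(5/\sqrt6-c)$, while $\Delta$ already carries the sign of $c-5/\sqrt6$ on a left neighbourhood of $u=1$, the zero of $\Delta$ closest to $u=1$ would have to be crossed the wrong way; so the expansion at the saddle end alone excludes interior zeros, and the behaviour at $u=0$ need not enter the proof at all.
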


The above inequalities improve the bounds given in \eqref{eq:3}. Notice also that
when $c=5/\sqrt{6}$ we have obtained the exact expression of the traveling wave
solution of the classical Fisher-Kolmogorov equation, $u_t=u_{xx}+ u(u-1),$
\[
u(x,t)=\frac{3+2\sqrt{2}}{\left(1+\sqrt2+ e^{\frac{1}{\sqrt
6}\,\left(x-\frac5{\sqrt{6}}t\right) }\right)^2},
\]
which coincides with the one given in \cite{AZ}. The novelty of our result is that
a similar expression gives a bound of the traveling wave for all the values of
$c$.

The methods developed to study the classical Fisher-Kolmogorov equation can also be applied for the Newell-Whitehead-Segel and the Zeldovich equations.

\section{Proof of Theorem~\ref{thm:1.1}}\label{se:2}

For computational reasons it is more convenient to locate the saddle point of system~\eqref{eq:2} at the origin. So we introduce the new variables $x=1-u,$ $y=v$ and $t=s$ and it writes as
\begin{equation}\label{eq:4}
\left\{
\begin{array}{ccl}
 x'&=&-y, \\
y'&=&-c y +g(x),
\end{array}
\right.
\end{equation}
where $g(x)=-f(1-x)$ and the prime denotes derivative with respect to $t$. Observe that the variable $x$ introduced above does not coincide with the one used in equation~\eqref{eq:1}. Notice that $g$ satisfies the following set of hypotheses

\noindent ${\bf H:}$ $ g(0)=g(1)=0, g'(0)<0, g'(1)
>0 $ and $g''(x)>0$ for all $x\in\mathbb{R}$.

The above system has only two critical points $(0,0)$ and $(1,0)$.
The origin is a saddle point with eigenvalues
\begin{equation}\label{eq:5}
\lambda_s^{\pm}=\dfrac{-c\pm\sqrt{c^2-4g'(0)}}{2}
\end{equation}
and corresponding eigenvectors $(1,-\lambda_s^+)$ and $(1,-\lambda_s^-)$. Similarly, when $c^2-4g'(1)\ge0$ the point $(1,0)$ is an attracting node and its eigenvalues are
\begin{equation*}
\lambda_n^{\pm}=\dfrac{-c\pm\sqrt{c^2-4g'(1)}}{2}<0.
\end{equation*}
A sketch of the phase portrait of system~\eqref{eq:4} is given in Figure~\ref{nfig4}. There we can see the heteroclinic connection that we are interested
to locate. Indeed it is given by ones of the branches of the unstable separatrix
of the saddle point. In the new coordinates we will call it $\Gamma_c$.

\begin{figure}[h]
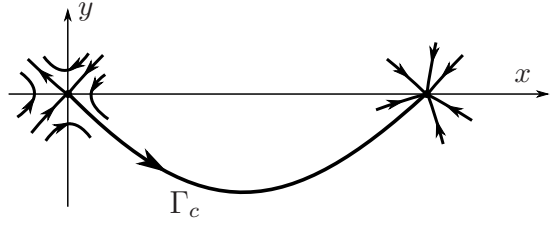

\begin{center}
 \begin{overpic}{nfig4}
 \put(30,-1){$\Gamma_{c}$}
 \put(94,23.5){\small $x$}
 \put(13,36){\small $y$}
 \end{overpic}
\end{center}
\vspace{5mm} \caption{Phase portrait showing the unstable separatrix $\Gamma_c$ of
system~\eqref{eq:4}.}\label{nfig4}
\end{figure}

\begin{proposition}\label{prop:2.1} Consider system~\eqref{eq:4}, with $g$
satisfying hypotheses ${\bf H}$ and $c\ge2\sqrt{g'(1)}$. Set
\[
\underline\lambda:=\dfrac{c-\sqrt{c^2-4g'(1)}}{2g'(1)}\quad\mbox{and}\quad
\overline\lambda:=\dfrac{c-\sqrt{c^2-4g'(0)}}{2g'(0)}.
\]
Let $\Gamma_c$ be its heteroclinic orbit. Then $\Gamma_c$ can be parametrized as
$ \Gamma_c=\{(x,\gamma_c(x)),\, x\in[0,1]\}$ and it holds for all $x\in(0,1)$
that
\[
\underline{\lambda}\, g(x)< \gamma_c(x)<\overline {\lambda}\, g(x)<0.
\]
\end{proposition}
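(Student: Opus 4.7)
The plan is to use the curves $C_\lambda : y = \lambda g(x)$ as a one-parameter family of transverse barriers, and to trap the unstable separatrix $\Gamma_c$ between $C_{\overline\lambda}$ and $C_{\underline\lambda}$.

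First I would check that $\Gamma_c$ is a graph over $[0,1]$. On the segment $\{y=0,\,0<x<1\}$ the field reduces to $(0,g(x))$ with $g<0$, so $\{y=0\}$ is transverse to the flow; the unstable branch leaving $(0,0)$ along $(1,-\lambda_s^+)$ must remain in $\{y<0\}$ until it reaches the node, so $x'=-y>0$ and the orbit is a graph $y=\gamma_c(x)$.

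Next, a direct computation of the inner product of the field with $\nabla(y-\lambda g(x))$ gives, along $C_\lambda$,
\[
\bigl\langle(x',y'),\nabla(y-\lambda g(x))\bigr\rangle=g(x)\,F(x,\lambda),\qquad F(x,\lambda):=\lambda^{2}g'(x)-c\lambda+1.
\]
By construction $F(0,\overline\lambda)=0=F(1,\underline\lambda)$, and hypothesis $\mathbf H$ makes $g'$ strictly increasing, so $F(x,\overline\lambda)>0$ on $(0,1]$ and $F(x,\underline\lambda)<0$ on $[0,1)$. Combined with $g<0$ on $(0,1)$, the field on $C_{\overline\lambda}$ points strictly into $\{y<\overline\lambda g(x)\}$, while the field on $C_{\underline\lambda}$ points strictly into $\{y>\underline\lambda g(x)\}$. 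Hence the region $R:=\{0<x<1,\;\underline\lambda g(x)<y<\overline\lambda g(x)\}$ is positively invariant.

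The remaining step is to show that $\Gamma_c$ enters $R$ immediately after leaving the saddle. The linear comparison $\underline\lambda g'(0)<\overline\lambda g'(0)=-\lambda_s^+=\gamma_c'(0)$ gives $\gamma_c(x)>\underline\lambda g(x)$ at first order, so only the upper inequality requires work: $C_{\overline\lambda}$ and $\Gamma_c$ are tangent at the origin. Expanding $\gamma_c(x)=-\lambda_s^+ x+a\,x^{2}+O(x^{3})$ and substituting into the orbit relation $\gamma_c\gamma_c'=c\gamma_c-g$, matching quadratic coefficients forces $a=g''(0)/(2(3\lambda_s^++c))$; using $\overline\lambda=1/(\lambda_s^++c)$ (a consequence of the characteristic equation $(\lambda_s^+)^2+c\lambda_s^++g'(0)=0$) the difference becomes
\[
\gamma_c(x)-\overline\lambda g(x)=-\frac{g''(0)\,\lambda_s^+}{(3\lambda_s^++c)(\lambda_s^++c)}\,x^{2}+O(x^{3})<0,
\]
so $\Gamma_c$ enters $R$, and invariance of $R$ propagates both inequalities up to $(1,0)$. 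The main technical point is this second-order computation at the saddle: because $C_{\overline\lambda}$ is tangent to $\Gamma_c$ there, first-order information cannot distinguish the two curves, and it is precisely the sign $g''(0)>0$ from $\mathbf H$ that forces $\Gamma_c$ onto the correct side of the barrier $C_{\overline\lambda}$; everything else is sign bookkeeping.
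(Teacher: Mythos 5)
Your proposal is correct and follows essentially the same route as the paper: the same one-parameter family of barrier curves $y=\lambda g(x)$, the same transversality computation giving $g(x)\bigl(1-c\lambda+\lambda^{2}g'(x)\bigr)$, and monotonicity in $x$ from $g''>0$ to pin down the two critical values $\overline\lambda,\underline\lambda$. Your explicit second-order expansion of $\gamma_c$ at the saddle, showing the separatrix enters the trapping region despite being tangent to $y=\overline\lambda g(x)$ there, is a correct and welcome elaboration of a step the paper only asserts implicitly (it merely remarks on the tangency after the proof).
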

\begin{proof} Consider the 1-parameter family of maps $G_\lambda(x,y)=y-\lambda g(x)$.
We compute
\begin{align*}
\left.\langle \nabla G_\lambda(x,y), \big(-y, -cy+g(x)\big)\rangle
\right|_{y=\lambda g(x)} =g(x)\big(1-c\lambda+g'(x)\lambda^2\big)=:g(x)N_\lambda(x).
\end{align*}
Since on $(0,1)$ it holds that $g(x)<0$, if we choose $\lambda$ such that
$N_\lambda$ does not
vanish on the same interval we will have that the corresponding curve $y=\lambda g(x)$ is without contact. Notice that $N_\lambda'(x)=\lambda^2 g''(x)$ and so for $\lambda\ne0$, the hypotheses ${\bf H}$ imply that the function $N_\lambda$ is increasing. Therefore:
 \begin{enumerate}[(a)]
\item If for some $\lambda>0$, $N_\lambda(0)\ge0$ then $N_\lambda(x)>0$ for all $x>0.$
\item If for some $\lambda>0$, $N_\lambda(1)\le0$ then $N_\lambda(x)<0$ for all $x<1$.
\end{enumerate}

The conditions $N_\lambda(j)=0$ for $j=0,1$, write as $1-c\lambda+g'(j)\lambda^2=0.$
Their solutions are
\begin{equation*}
\lambda_j^{\pm}=\dfrac{c\pm\sqrt{c^2-4g'(j)}}{2g'(j)}.
\end{equation*}
It is easy to prove that
\[\lambda_0^+<0<\lambda_0^-=:\overline {\lambda}< \underline{\lambda}:=
\lambda_1^-\le \lambda_1^+.\]

Therefore, taking $G_{\overline \lambda}(x,y)=0$ and $G_{\underline \lambda}(x,y)=0$ as an upper and lower boundary, respectively, we have constructed a subset of the strip $\{(x,y),\,0\le x\le1\}$, that contains the heteroclinic orbit $\Gamma_c$, see Figure~\ref{nfig5}. Then, on it, $x'=-y>0$, $\Gamma_c$ can be parametrized as a function of $x$, say $y=\gamma_c(x)$, and the inequalities of the statement follow.
\end{proof}

\begin{figure}[h]
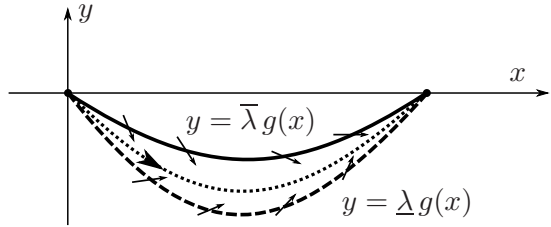

\begin{center}
 \begin{overpic}{nfig5}
 \put(33,18){\small $y=\overline\lambda\, g(x)$}
 \put(62,3){\small $y=\underline\lambda\, g(x)$}
 \put(93,27){\small $x$}
 \put(13,39){\small $y$}
 \end{overpic}
\end{center}
\vspace{5mm}
\caption{Upper and lower bounds of the unstable separatrix (dotted line) $\Gamma_c$.}\label{nfig5}
\end{figure}

Notice that $y=\overline{\lambda} g(x)=-\lambda_s^+ x+O(x^2)$ and therefore this curve is tangent to the unstable separatrix of the saddle point.

Using the above proposition we can also approach the parametrization of $\Gamma_c$ with respect the actual time $t.$ We need to introduce some new functions. Given the Cauchy problem
\begin{equation}\label{eq:6}
\left\{
\begin{array}{l}
\dfrac{dx}{dt}=-\lambda g(x), \\[0.2cm]
x(0)=1/2,
\end{array}
\right.
\end{equation}
we will denote by $x=w_\lambda(t)$ its corresponding solution.

\begin{theorem}\label{thm:2.2}
Under the same hypotheses and notations of Proposition~\ref{prop:2.1}, if
$(x_c(t),y_c(t))$ is the parametrization of $\Gamma_c$ such that $x_c(0)=1/2$ it holds that
\begin{equation*}
x_c(t)\in\left\langle
w_{\underline\lambda}(t),w_{\overline\lambda}(t)\right\rangle,
\end{equation*}
where $w_{\lambda}(t)$ is defined in \eqref{eq:6}, see Figure~\ref{nfig6}.
\end{theorem}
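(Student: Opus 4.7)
The plan is to convert the two-dimensional comparison from Proposition~\ref{prop:2.1} into a one-dimensional comparison of autonomous scalar flows. Along $\Gamma_c$ we substitute $y=\gamma_c(x)$ into the first equation of~\eqref{eq:4}, so that the component $x_c$ of the time parametrization satisfies the scalar autonomous Cauchy problem $x_c'=-\gamma_c(x_c)$, $x_c(0)=1/2$. Proposition~\ref{prop:2.1} then gives, for every $x\in(0,1)$,
$$0<-\overline\lambda\,g(x)<-\gamma_c(x)<-\underline\lambda\,g(x),$$
so $x_c$, $w_{\overline\lambda}$ and $w_{\underline\lambda}$ are all strictly increasing and confined to the trapping interval $(0,1)$. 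Since $g'(0),g'(1)\ne 0$, the quadrature $\int d\xi/g(\xi)$ diverges at $\xi=0,1$, and all three orbits are defined for every $t\in\R$.

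Next I would invoke the standard scalar comparison principle: if $A\ge B>0$ are Lipschitz on $(0,1)$ and two scalar autonomous equations $u'=A(u)$, $v'=B(v)$ start at $u(0)=v(0)=1/2$, then $u(t)\ge v(t)$ for $t\ge 0$ and $u(t)\le v(t)$ for $t\le 0$. The slickest argument is by quadrature: the arrival time of $u$ at level $y$ equals $\int_{1/2}^{y}d\xi/A(\xi)$ and likewise for $v$, and the hypothesis $A\ge B$ makes the first integral smaller in absolute value on each side of $1/2$; hence the faster solution is further from $1/2$ at any given $t$, in both directions of time. Applied in turn to the pairs $(x_c,w_{\overline\lambda})$ and $(w_{\underline\lambda},x_c)$, this yields
$$w_{\overline\lambda}(t)\le x_c(t)\le w_{\underline\lambda}(t)\text{ for }t\ge 0,\qquad w_{\underline\lambda}(t)\le x_c(t)\le w_{\overline\lambda}(t)\text{ for }t\le 0,$$
which is exactly $x_c(t)\in\langle w_{\underline\lambda}(t),w_{\overline\lambda}(t)\rangle$ for every $t\in\R$.

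The only delicate point is that the two bounds exchange roles as one crosses $t=0$, but this is automatically absorbed into the $\langle\cdot,\cdot\rangle$ notation of the statement. Global existence on $\R$ and the fact that the three orbits remain inside the strip $\{0<x<1\}$ are forced by the simple zeros of $g$ at the endpoints of the trapping interval, so no hypothesis beyond~$\mathbf{H}$ is required.
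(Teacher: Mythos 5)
Your proposal is correct and follows essentially the same route as the paper: substitute $y=\gamma_c(x)$ to get the scalar equation $x_c'(t)=-\gamma_c(x_c(t))$, use Proposition~\ref{prop:2.1} to sandwich $-\gamma_c$ between $-\overline\lambda\,g$ and $-\underline\lambda\,g$, and conclude by comparison with the solutions $w_{\overline\lambda}$, $w_{\underline\lambda}$ of~\eqref{eq:6}. The only difference is cosmetic: the paper invokes the standard differential-inequality comparison directly (handling $t<0$ ``similarly''), while you justify it via the time-map quadrature and spell out global existence, which is a fine, slightly more self-contained way to do the same thing.
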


\begin{figure}[h]
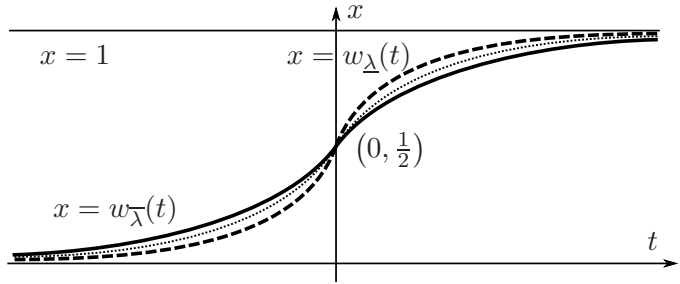

\begin{center}
 \begin{overpic}{nfig6}
 \put(52,19){\small $\left(0,\frac{1}{2}\right)$}
 \put(42,33){\small $x=w_{\underline\lambda}(t)$}
 \put(7,10){\small $x=w_{\overline\lambda}(t)$}
 \put(96,5){\small $t$}
 \put(5,33){\small $x=1$}
 \put(51,40){\small $x$}
 \end{overpic}
\end{center}
\vspace{5mm}
\caption{Upper and lower bounds of the $t$-parametrization of $\Gamma_c$ (dotted curve).}\label{nfig6}
\end{figure}

\begin{proof} By Proposition~\ref{prop:2.1} we know that
\[
\underline{\lambda}\, g(x)< \gamma_c(x)<\overline {\lambda}\, g(x)<0.
\]
Recall that $\Gamma_c$ is parametrized as $y=\gamma_c(x)$ and therefore since
in system~\eqref{eq:4},
$x'(t)=-y(t)$, it holds that $x_c'(t)=-\gamma_c(x_c(t))$. Therefore
\[
-\underline{\lambda}\, g(x_c(t))> x_c'(t)>-\overline {\lambda}\, g(x_c(t))>0.
\]
Since $x_c(0)=w_{\overline {\lambda}}(0)=1/2$, $x_c(t)$ satisfies the differential inequality
\[
x_c'(t)>-\overline {\lambda}\, g(x_c(t))
\]
and $w_{\overline {\lambda}}(t)$ the equality, it holds that
$x_c(t)>w_{\overline {\lambda}}(t)$ for all $t>0,$ as we wanted to see.
The other cases follow similarly.
\end{proof}

\begin{corollary}\label{cor:2.3} If in system~\eqref{eq:4},
$g(x)=x^m-x,$ $2\le m\in\mathbb{N},$ then
\[
x_c(t)\in\left\langle\frac{1}{\sqrt[m-1]{1+(2^{m-1}-1)
e^{\phantom{\overline{\lambda}}\!\!\!-(m-1)\underline{\lambda}\,t}}},
\frac{1}{\sqrt[m-1]{1+(2^{m-1}-1)e^{-(m-1)\overline{\lambda}\,t}}}\right\rangle.
\]
 In particular for the Fisher-Kolmogorov case, $m=2$,
\begin{equation}\label{eq:7}
x_c(t)\in\left\langle\frac{1}{{1+e^{-\underline{\lambda}\,t}}},
\frac{1}{{1+e^{-\overline{\lambda}\,t}}}\right\rangle.
\end{equation}
\end{corollary}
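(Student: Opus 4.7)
The plan is to invoke Theorem~\ref{thm:2.2} and compute the auxiliary function $w_\lambda(t)$ defined by the Cauchy problem \eqref{eq:6} in closed form when $g(x)=x^m-x$. The ODE becomes
\[
\frac{dx}{dt}=-\lambda(x^m-x)=\lambda\,x(1-x^{m-1}),\qquad x(0)=1/2,
\]
which is a Bernoulli equation. Since $g(x)<0$ on $(0,1)$ and the relevant values $\underline\lambda,\overline\lambda$ are positive, the solution is monotone increasing and remains in $(0,1)$ for $t>0$ (and in $(0,1/2)$ for $t<0$), so separation of variables is legitimate and all quantities below are positive.

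Next I separate variables using the partial-fraction identity
\[
\frac{1}{x(1-x^{m-1})}=\frac{1}{x}+\frac{x^{m-2}}{1-x^{m-1}},
\]
which, after integration, yields
\[
\frac{1}{m-1}\,\ln\!\frac{x^{m-1}}{1-x^{m-1}}=\lambda t+C.
\]
The initial condition $x(0)=1/2$ fixes $C$ via $x^{m-1}/(1-x^{m-1})=1/(2^{m-1}-1)$ at $t=0$, so the implicit solution is $x^{m-1}/(1-x^{m-1})=e^{(m-1)\lambda t}/(2^{m-1}-1)$. Solving this relation algebraically for $x$ produces
\[
w_\lambda(t)=\frac{1}{\sqrt[m-1]{1+(2^{m-1}-1)\,e^{-(m-1)\lambda t}}}.
\]

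Finally, substituting $\lambda=\underline\lambda$ and $\lambda=\overline\lambda$ into the conclusion of Theorem~\ref{thm:2.2} yields the general enclosure stated in the corollary, and specializing to $m=2$ (so that $2^{m-1}-1=1$ and the $(m-1)$-th root disappears) gives \eqref{eq:7}. The argument is essentially a routine integration; I do not anticipate a conceptual obstacle. The only thing to be slightly careful about is ensuring the legitimacy of the separation and of the logarithms, which follows at once from the sign of $g$ on $(0,1)$ keeping the trajectory inside this interval, so the integrand never meets a singularity.
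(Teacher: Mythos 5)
Your proposal is correct and follows exactly the paper's route: solve the Cauchy problem \eqref{eq:6} explicitly for $g(x)=x^m-x$ (the paper just states the resulting formula for $w_\lambda(t)$, which matches yours) and then apply Theorem~\ref{thm:2.2} with $\lambda=\underline\lambda$ and $\lambda=\overline\lambda$. Your separation-of-variables computation and the sign discussion are accurate, so nothing further is needed.
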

\begin{proof} It suffices to solve the Cauchy problem~\eqref{eq:6} and then apply
Theorem~\ref{thm:2.2}. When $g(x)=x^m-x$ we obtain
\[
w_\lambda(t)=\frac{1}{\sqrt[m-1]{1+(2^{m-1}-1)e^{-(m-1){\lambda}t}}}
\]
and so the result follows.
\end{proof}

Theorem~\ref{thm:1.1} is a reformulation for system~\eqref{eq:2} of the results of
this section obtained for system~\eqref{eq:4}.

\section{Sharper upper and lower bounds for $\Gamma_c$ in the Fisher-Kolmogorov case}\label{se:3}

This section will be devoted to find sharper upper and lower bounds for
$\Gamma_c$ in the
Fisher-Kolmogov system~\eqref{eq:4} when $g(x)=x(x-1)$.
We will use dynamical tools inspired in \cite{GGT}. One of the key points will be to find algebraic curves constructed by imposing that these curves coincide as much as possible with the unstable separatrix of the saddle point.

To avoid the appearance of square roots during the computations it
is convenient to include a new parameter $r$ in such a way that
\[
c=\dfrac 1 r-r.
\]
Then system~\eqref{eq:4} writes as
\begin{equation}\label{eq:8}
\left\{
\begin{array}{ccl}
 x'&=&-y, \\
y'&=&\left(r-\dfrac1r\right)y +x(x-1),\quad r\in(0,\sqrt2-1].
\end{array}
\right.
\end{equation}
Notice that the condition on $r$ implies that $c=r-1/r\ge 2$. One advantage of
introducing this new parameter is that the eigenvalues of the saddle are now
$-1/r<0<r$. In the notation of the previous section
$\lambda_s^+=\overline\lambda=r.$ Hence, if we denote $\Gamma^r:=\Gamma_{1/r-r}$
the searched heteroclinic trajectory, from Proposition~\ref{prop:2.1} we know that
$y=rx(x-1)$ is an upper bound for $\Gamma^r.$

First we need to know the local expansion of the unstable manifold of the
saddle point.

\begin{lemma}\label{lem:3.1}
The local unstable manifold of the origin of system~\eqref{eq:8} writes as the analytic function
\begin{align*}
y=h^r(x)=&-rx+\dfrac r{2r^2+1}x^2+\dfrac{2r^3}{(2r^2+1)^2(3r^2+1)}x^3\\&+
\dfrac{10r^5}{(2r^2+1)^3(3r^2+1)(4r^2+1)}x^4\\&
+\dfrac{12r^7(19r^2+6)}{(2r^2+1)^4(3r^2+1)^2(4r^2+1)(5r^2+1)}x^5 + O(x^5),
\end{align*}
and the subsequent terms can be determined recurrently and are positive for $r>0$.
\end{lemma}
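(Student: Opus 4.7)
The plan is to invoke the classical fact that a hyperbolic saddle of an analytic planar vector field admits an analytic local unstable manifold tangent to the unstable eigendirection (via, e.g., the analytic invariant manifold theorem or the Briot--Bouquet theorem). Since system~\eqref{eq:8} is polynomial and the origin has eigenvalues $r$ and $-1/r$ with eigenvector $(1,-r)$ for the positive eigenvalue $r$, I would write
\[
y=h^r(x)=\sum_{k\ge 1} a_k\, x^k,\qquad a_1=-r,
\]
for $x$ near $0$, with convergence in a neighbourhood of the origin.

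To determine the coefficients $a_k$, I would eliminate the time variable using $dy/dx=y'/x'$ along the separatrix, which turns the invariance of the manifold into the algebraic identity
\[
h(x)\,h'(x)+\left(r-\frac{1}{r}\right)h(x)+x^2-x=0.
\]
Substituting the power series and collecting the coefficient of $x^n$ gives, after separating the terms containing $a_n$, the explicit recursion
\[
(nr^2+1)\,a_n \;=\; r\sum_{k=2}^{n-1}(n+1-k)\,a_k\, a_{n+1-k}\qquad (n\ge 3),
\]
while the coefficient of $x^2$ alone fixes $a_2=r/(2r^2+1)$. The coefficient of $x$ reproduces the eigenvalue equation for $a_1=-r$, as a consistency check. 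Running this recursion for $n=3,4,5$ reproduces the four explicit non-linear terms stated in the lemma; this is the routine part and I would simply verify it without writing it out in full.

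The positivity assertion then follows by a short induction on $n$. The base case $a_2>0$ is immediate for $r>0$. For $n\ge 3$, every index appearing on the right-hand side of the recursion satisfies $2\le k\le n-1$ and $2\le n+1-k\le n-1$, so by the induction hypothesis the products $a_k a_{n+1-k}$ are positive; together with $(n+1-k)>0$, $r>0$ and $nr^2+1>0$ this yields $a_n>0$.

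The main obstacle, in my view, is not the recursion or the positivity, which are mechanical, but rather the analyticity statement. If one is not content to quote the analytic invariant manifold theorem, a self-contained proof requires a majorant estimate for the recursion above to control $|a_n|$ by a geometric sequence in $x$. The cleanest route for this paper is to invoke the analytic manifold theorem as a black box and use the invariance equation only to identify the Taylor coefficients and to establish the recursion that drives the positivity induction.
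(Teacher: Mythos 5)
Your proposal is correct and follows essentially the same route as the paper: eliminate time to get the invariance identity $h(x)h'(x)+(r-\tfrac1r)h(x)+x^2-x\equiv 0$, pick the root $a_1=-r$ of the eigenvalue equation, derive the coefficient recursion (yours is the paper's $h_n=\bigl(2h_2h_{n-1}+\cdots+(n-1)h_{n-1}h_2\bigr)/(nr+1/r)$ after multiplying by $r$ and symmetrizing the sum), and conclude positivity by induction since only indices $\ge 2$ appear. The paper likewise takes the analyticity of the saddle's unstable manifold as known, so quoting the analytic invariant manifold theorem is exactly in its spirit.
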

\begin{proof}
Let $y=h^r(x)=h(x)$ be the local expression of any of the separatrices of the
saddle point, being $h$ an analytic function at zero. Then
$\left.y'-h'(x)x'\right|_{y=h(x)}\equiv0,$ or equivalently,
\begin{equation}\label{eq:9}
\left.\left(r-\dfrac 1r\right)y+x(x-1)+h'(x)y\right|_{y=h(x)}=
\left(r-\frac1r+h'(x)\right)h(x)-x+x^2\equiv0.
\end{equation}
Writing $h(x)=h_1x+h_2x^2+\ldots$ and plugging this expression in the above one we
get that $h_1^2+(r-1/r)h_1-1=0.$ So we choose $h_1=-r.$ Then the right hand
identity in~\eqref{eq:9} writes as
\[
\left(-\frac 1r +2h_2x+3h_3x^2+\cdots\right)(-rx+h_2x^2+h_3x^3+\cdots)-x+x^2\equiv0.
\]
So $-2h_2rx^2-h_2x^2/r+x^2\equiv0$, which implies $h_2=r/(2r^2+1).$ In general,
for $n>2$ it holds that
\[
-\dfrac 1rh_n+2h_2h_{n-1}+3h_3h_{n-2}+\cdots+(n-1)h_{n-1}h_2 -nh_nr=0.
\]
Therefore,
\[
h_n=\frac{2h_2h_{n-1}+3h_3h_{n-2}+\cdots+(n-1)h_{n-1}h_2}{nr+1/r}
\]
and by induction $h_n>0$, for $n>2$, as we wanted to prove.
\end{proof}

\begin{figure}[h]
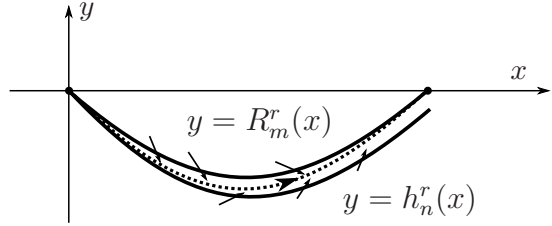

\begin{center}
 \begin{overpic}{nfig7}
 \put(33,17.5){$y=R_m^r(x)$}
 \put(62,3){$y=h_n^r(x)$}
 \put(93,27){\small $x$}
 \put(13,39){\small $y$}
 \end{overpic}
\end{center}
\vspace{5mm}
\caption{Upper and lower bounds of the separatrix $\Gamma_c$ plotted as a dotted thin line. Here $2\le n\le 100$ and $1\le m \le 10$.}\label{nfig7}
\end{figure}

\begin{proposition}\label{prop:3.2} Let
$ \Gamma^r=\{(x,\gamma^r(x)),\, x\in[0,1]\}, $ be a parametrization of the
heteroclinic solution of system~\eqref{eq:8}. Then for all $x\in(0,1)$ it holds
that
\[
h^r_{2}(x)< h^r_{3}(x)<\cdots< h^r_{99}(x)< h^r_{100}(x)<\gamma^r(x),
\]
where $h^r_n(x)$ is the Taylor polynomial of degree $n$ in powers of $x$ of the
function $h^r(x)$ defined in Lemma~\ref{lem:3.1}. See Figure~\ref{nfig7}.
\end{proposition}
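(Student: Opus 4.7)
The plan is to dispatch the chain of inequalities among the Taylor polynomials first, and then to establish the bound $h^r_n(x)<\gamma^r(x)$ via a without-contact argument in the spirit of Proposition~\ref{prop:2.1}. Since Lemma~\ref{lem:3.1} guarantees $h_k>0$ for every $k\ge 2$, the identity $h^r_{n+1}(x)=h^r_n(x)+h_{n+1}x^{n+1}$ immediately yields $h^r_n(x)<h^r_{n+1}(x)$ for $x\in(0,1)$ and any $n\ge 2$. Hence the chain $h^r_2<h^r_3<\cdots<h^r_{100}$ is free, and the only substantive step is the comparison of $h^r_n$ with $\gamma^r$.

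For that comparison, I would take $F_n(x,y):=y-h^r_n(x)$ and evaluate the Lie derivative of $F_n$ along the vector field of~\eqref{eq:8}, restricted to the curve $y=h^r_n(x)$. This produces a polynomial
\[
P_n(x):=h^r_n(x)\,h^{r\prime}_n(x)+\left(r-\tfrac{1}{r}\right)h^r_n(x)+x(x-1)
\]
of degree at most $2n-1$. Because $h^r$ itself satisfies identity~\eqref{eq:9}, subtracting and Taylor expanding at $x=0$ shows that the lowest order term of $P_n$ is $[r(n+1)+1/r]\,h_{n+1}\,x^{n+1}$, whose coefficient is positive by Lemma~\ref{lem:3.1}. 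The essential claim, and the main obstacle, is that in fact $P_n(x)>0$ for every $x\in(0,1)$ and every $r\in(0,\sqrt{2}-1]$. Since $n$ only ranges over the finite set $\{2,\ldots,100\}$, I expect this to be verified case by case using computer algebra: expressing $P_n$ as a polynomial in $x$ with coefficients rational in $r$, and checking positivity via Sturm's algorithm or by exhibiting a sum-of-squares certificate, possibly after a convenient change of variable such as $x=1-u$ paired with a reparametrization of $r$.

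Granting $P_n>0$, the comparison $h^r_n(x)<\gamma^r(x)$ follows from a first-contact argument. Near the origin one has $\gamma^r(x)-h^r_n(x)=h_{n+1}x^{n+1}+O(x^{n+2})>0$, so $\gamma^r$ starts strictly above $h^r_n$. If the two graphs first met at some $x_0\in(0,1)$, then along the separatrix the function $t\mapsto F_n(x(t),y(t))$ would drop from positive values to zero at $x_0$, forcing $(d/dt)F_n\le 0$ at that instant. But by construction this derivative equals $P_n(x_0)>0$, a contradiction. Therefore $h^r_n(x)<\gamma^r(x)$ throughout $(0,1)$, and concatenation with the monotonicity step yields the complete chain of inequalities in the statement.
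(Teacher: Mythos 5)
Your proposal is correct and follows essentially the same route as the paper: the chain among the $h^r_n$ comes from the positivity of the coefficients in Lemma~\ref{lem:3.1}, and the bound $h^r_n(x)<\gamma^r(x)$ is obtained by computing the Lie derivative of $y-h^r_n(x)$ along the vector field of~\eqref{eq:8} (your $P_n$ is exactly the paper's $M^r_n$, and your leading term $[(n+1)r+1/r]\,h_{n+1}x^{n+1}$ matches their explicit $n=3$ computation) together with a first-contact argument. The only difference is in how positivity of $P_n$ on $(0,1)$ is certified: the paper observes computationally, for $2\le n\le 100$, that every monomial of $M^r_n(x)$ has a positive coefficient, whereas you propose Sturm or sum-of-squares certificates, which is an equally valid finite verification.
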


\begin{proof} We will give first the details for $n=3$. It is evident that for $x>0$ sufficiently small we
have $h^r(x)>h^r_3(x)$.
First we evaluate the polynomial $h^r_3(x)$ at $x=1$. We obtain
\[
h^r_3(1)=-\dfrac{2r^5(5+6r^2)}{(1+2r^2)^2(1+3r^2)}.
\]
This quantity is negative for $r>0$ and it is a monotonous decreasing function of
$r$. Its value at $r=\sqrt{2}-1$ is approximately $-0.054$. Moreover,
$h^r_3(0)=0$, $h^r_3(x)''>0$ for $x>0$ and $h^r_3(x)<0$ for $x>0$ and sufficiently
small. Then we conclude that $h^r_3(x)<0$ for all $x\in(0,1]$. We will show now
that the flow of the vector field $Z^r(x,y)=(-y, (r-1/r)y +x(x-1))$ associated to system~\eqref{eq:8} crosses the curve $F^r_3(x,y)=y-h^r_3(x)=0$ upwards. We
compute
\[
M^r_3(x):=\left.\left\langle \nabla F^r_3(x,y), Z^r(x,y)
\right\rangle\right|_{y=h^r_3(x)}.
\]
We obtain
\[
M^r_3(x)=\dfrac{10r^4x^4}{(1+2r^2)^3(1+3r^2)}+\dfrac{12r^6x^5}{(1+2r^2)^4(1+3r^2)^2}.
\]
This expression is positive for $x>0$.

All the other cases can be studied by using the same method. The key point is that
all the monomials in the corresponding expression of $M_n^r(x)$ are positive for $x>0.$
\end{proof}

In Proposition~\ref{prop:3.2}, we have chosen to stop at the value 100 because the
computation of the function $M^r_{100}$ used in its proof together with the
testing that all its coefficients are positive takes more than four hours of CPU
time in our computer. In any case, for practical uses, it suffices to consider small values of $n$.

\begin{remark}\label{rem:3.3}
It is clear that if we could prove that the radius of convergence of the series given in Lemma~\ref{lem:3.1}, when $r\in(0,\sqrt{2}-1]$, is 1 we would have obtained an infinite monotonous sequence of polynomials tending to the actual separatrix. Unfortunately we have not succeeded in our attempts. In any case, computing an approximation of the radius of convergence, by using several hundreds of terms of the series, seems to show that the result is true.
\end{remark}

We give now upper bounds for $\Gamma^r$. Before to state the next
proposition we introduce some definitions. Consider the rational function
\[
R^r(x)=\dfrac{h^r_{22}(x)}{rx(x-1)}.
\]
We define the sequence of rational functions $R^r_n(x)$ as follows:
\[
R^r_n(x)=rx(x-1)P^r_{n,n}(x),\quad n=1,\ldots,10,
\]
where the $P^r_{n,n}(x)$ are the Pad\'e approximants of order $(n,n)$ of the
function $R^r(x)$. For instance
\[
R^r_1(x)=rx(x-1)\frac{(2r^2+1)(3r^2+1)-3r^2x}{(2r^2+1)(3r^2+1)-r^2(6r^2+5)x}.
\]
Recall that the Pad\'e approximants $P^r_{n,n}(x)$ are rational functions whose
numerators and denominators are polynomials of degree $n$ and their Taylor
expansions in powers of $x$ are the same that the Taylor expansion of $R^r(x)$ up
to order $2n$. We write
\[R_n^r(x)=rx(x-1)\frac{A_n^r(x)}{C_n^r(x)}, \quad \mbox{with}\quad C_n^r(0)>0.\]
The rational functions $R^r_n(x)$ vanish at $x=0$ and $x=1$ and their Taylor
expansions coincide with the Taylor expansion of $h^r(x)$ up to order $2n$. As we
will see in the proof of next proposition they are well defined for all
$x\in[0,1]$ because $C_n^r(x)\ne0$.

\begin{proposition}\label{prop:3.4} Let
$ \Gamma^r=\{(x,\gamma^r(x)),\, x\in[0,1]\}, $ be a parametrization of the
heteroclinic solution of system~\eqref{eq:8}. For $x\in(0,1)$ and $r\in(0,\sqrt{2}-1]$ it holds that
\[
\gamma^r(x)<R^r_{10}(x)<R^r_9(x)<\cdots<R^r_2(x)<R^r_1(x),
\]
see Figure~\ref{nfig7}.
\end{proposition}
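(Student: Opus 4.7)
The plan follows the strategy of Proposition~\ref{prop:3.2}: exhibit each curve $y = R^r_n(x)$ as a rational ``no-contact'' barrier for the vector field of~\eqref{eq:8}, then use the local behaviour at the saddle to locate $\Gamma^r$ relative to it. The statement splits into two independent claims: (i) $\gamma^r(x) < R^r_n(x)$ on $(0,1)$ for each $n = 1,\dots,10$, and (ii) the monotonicity $R^r_n(x) > R^r_{n+1}(x)$ on $(0,1)$ for $n = 1,\dots,9$.

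For (i), I would first verify by a finite symbolic check that the denominator $C^r_n(x)$ has no zeros on $[0,1]$ for $r \in (0, \sqrt{2}-1]$; this makes $R^r_n$ smooth on $[0,1]$, with $R^r_n(0) = R^r_n(1) = 0$ and $R^r_n(x) < 0$ on $(0,1)$. Setting $F^r_n(x,y) = y - R^r_n(x)$, I would compute
\[
M^r_n(x) := \bigl\langle \nabla F^r_n(x,y),\, Z^r(x,y)\bigr\rangle\big|_{y = R^r_n(x)} = R^r_n(x)\,(R^r_n)'(x) + \bigl(r-\tfrac{1}{r}\bigr) R^r_n(x) + x(x-1),
\]
and clear denominators so that $N^r_n(x) := (C^r_n(x))^2 M^r_n(x)$ is a polynomial in $x$ with coefficients polynomial in $r$. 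Writing $R^r_n = h^r + E_n$ and subtracting the invariance identity~\eqref{eq:9} satisfied by $h^r$, one obtains
\[
M^r_n = h^r E_n' + E_n\bigl((h^r)' + r - \tfrac{1}{r}\bigr) + E_n E_n',
\]
so $M^r_n$ vanishes at $x = 0$ to order at least $2n+2$, since the Pad\'e construction forces $E_n(x) = O(x^{2n+2})$. Together with the zero $M^r_n(1) = 0$ coming from $R^r_n(1) = 0$, this yields a factorisation
\[
N^r_n(x) = -\,x^{2n+2}(1-x)\, Q^r_n(x, r).
\]

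The main technical obstacle is now to verify $Q^r_n(x, r) > 0$ on $[0,1] \times (0, \sqrt{2}-1]$ for each $n = 1,\dots,10$. Following the template used in Proposition~\ref{prop:3.2}, the plan is to expand $Q^r_n$ as a polynomial in $x$ whose coefficients are polynomials in $r$ and check one by one that every such coefficient is strictly positive on $(0, \sqrt{2}-1]$; this is a finite symbolic computation whose workload grows with $n$, accounting for the cutoff at $n = 10$. Once $M^r_n(x) < 0$ on $(0,1)$ is established, inspection of the leading term of $M^r_n$ at $x = 0$, which equals $-c_n(r)\bigl[(2n+2)r + 1/r\bigr]x^{2n+2}$ with $c_n(r)$ the first nonzero Taylor coefficient of $E_n$, forces $c_n(r) > 0$. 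Hence $\gamma^r(x) < R^r_n(x)$ on a right neighbourhood of $0$. If equality $\gamma^r(x^*) = R^r_n(x^*)$ first occurred at some $x^* \in (0,1)$, then along $\Gamma^r$ the function $F^r_n$ would reach $0$ from below and therefore be non-decreasing at $x^*$, contradicting $M^r_n(x^*) < 0$. Hence $\gamma^r < R^r_n$ throughout $(0,1)$.

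For part (ii), write
\[
R^r_n(x) - R^r_{n+1}(x) = \frac{rx(x-1)\bigl(A^r_n(x)\,C^r_{n+1}(x) - A^r_{n+1}(x)\,C^r_n(x)\bigr)}{C^r_n(x)\,C^r_{n+1}(x)}.
\]
The denominator is positive by the preliminary step of part~(i); the numerator vanishes at $x = 1$, and at $x = 0$ to order at least $2n+2$, since both $R^r_n$ and $R^r_{n+1}$ share the Taylor expansion of $h^r$ up to that order. After factoring $-r\,x^{2n+2}(1-x)$ out of the numerator, positivity of $R^r_n - R^r_{n+1}$ on $(0,1)$ reduces to verifying that the remaining two-variable polynomial has constant sign on $[0,1]\times(0,\sqrt{2}-1]$, which is a symbolic check of the same type as in part~(i), carried out for each $n = 1,\dots,9$.
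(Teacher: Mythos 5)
Your overall architecture coincides with the paper's: view $y=R^r_n(x)$ as a transversal curve for $Z^r$, factor the transversality function as $x^{2n+2}(x-1)$ times a remaining factor (your order count $E_n=O(x^{2n+2})$ and the endpoint zero at $x=1$ are correct and match the paper's formula $N^r_n(x)=r^{4n+2}x^{2n+2}(x-1)B^r_n(x)/C^r_n(x)^3$), deduce the side on which $\Gamma^r$ leaves the origin from the sign of the leading coefficient, and run a first-contact argument; the chain $R^r_{10}<\cdots<R^r_1$ is likewise handled by an explicit difference computation. (Minor slip: $(C^r_n)^2M^r_n$ is not a polynomial, since $R^r_n(R^r_n)'$ carries $C^r_n(x)^3$ in its denominator; you must clear the cube.)

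The genuine gap is at the step you yourself flag as the main technical obstacle: you propose to prove $Q^r_n(x,r)>0$ (and, separately, $C^r_n(x)\ne0$ on $[0,1]$) by expanding in powers of $x$ and checking that every coefficient is a positive function of $r$, i.e.\ the same monomial-positivity trick that works in Proposition~\ref{prop:3.2}. For the Pad\'e-based curves this is exactly where that trick cannot be taken for granted, and the paper does something different: it shows $B^r_n(0),B^r_n(1),C^r_n(0),C^r_n(1)>0$, verifies that the discriminants $\operatorname{Dis}(B^r_n(x),x)$ and $\operatorname{Dis}(C^r_n(x),x)$ are polynomials in $r$ with positive integer coefficients (so the number of roots of $B^r_n$ and $C^r_n$ in $(0,1)$ cannot change as $r>0$ varies), and then applies Sturm's algorithm at the single value $r=1/10$ to conclude there are no roots, hence positivity for all $r>0$; the same computation also yields the well-definedness of $R^r_n$ ($C^r_n\ne0$), which you assume as a ``finite symbolic check'' without a method that handles the whole parameter range $r\in(0,\sqrt{2}-1]$. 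If the coefficientwise positivity you rely on fails (and the authors' recourse to the discriminant/Sturm argument strongly suggests it does), your plan has no fallback, so the key positivity step is unproved. For the monotonicity part your reduction is sound; in the paper the residual factor turns out to depend on $r$ alone, $R^r_k-R^r_{k-1}=x^{2k}(x-1)r^{4k-1}D_k(r^2)/(C^r_k C^r_{k-1})$ with $D_k$ having positive coefficients, which is what makes that check immediate once positivity of the $C^r_n$ is available.
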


\begin{proof}
We will show that for each $n,$ $1\le n\le 10,$ the flow of the vector field
$Z^r(x,y)=(-y, (r-1/r)y +x(x-1))$ associated to system~\eqref{eq:8} crosses
each curve $G^r_n(x,y)=y-R^r_n(x)=0$ forwards. We compute
\[
N^r_n(x):=\left.\left\langle \nabla G^r_n(x,y), Z^r(x,y)
\right\rangle\right|_{y=R^r_n(x)}.
\]
We obtain
\[
N^r_n(x)=r^{4n+2}x^{2n+2}(x-1)\dfrac{B^r_n(x)}{C^r_n(x)^3},
\]
where $B^r_n(x)$ and $C^r_n(x)$ are polynomials of degree $n$ in $x$. The
coefficients of $B^r_n(x)$ and $C^r_n(x)$ are polynomials in $r$.

We want to prove that for $r>0$ both polynomials $B^r_n(x)$ and $C^r_n(x)$ are positive for $x\in(0,1)$. We will approach the problem varying the parameter $r$ and studying
which are the possible bifurcations for the number of zeros of these polynomials
when $x\in(0,1)$. The values of $B^r_n(0)$, $C^r_n(0)$, $B^r_n(1)$ and
$C^r_n(1)$, are polynomials in $r$ whose coefficients are natural numbers. Then we
have $B^r_n(0)>0$, $C^r_n(0)>0$, $B^r_n(1)>0$ and $C^r_n(1)>0$ for $r>0$. For
$n\ge2$, we evaluate now the discriminants of the polynomials $B^r_n(x)$ and
$C^r_n(x)$, $\operatorname{Dis}(B^r_n(x),x)$ and $\operatorname{Dis}(C^r_n(x),x)$.
We obtain a polynomial in $r$ of degree $b_n$ with positive integers numbers as
coefficients for $\operatorname{Dis}(B^r_n(x),x)$ and a polynomial in $r$ of
degree $c_n$ with also positive integer numbers as coefficients for
$\operatorname{Dis}(C_n^r(x),x)$, where the degrees $b_n$ and $c_n$ are given in
Table~\ref{ta:1}. Then, the two discriminants do not vanish for $r>0$. In consequence, the
number of real roots in $(0,1)$ of each polynomial, $B^r_n(x)$ or $C^r_n(x)$,
does not change for $r>0$. Picking a concrete value of $r$, for instance
$r=1/10$, we found, by applying the Sturm algorithm, that they have no roots in
$(0,1)$. Then, we deduce that $B^r_n(x)>0$ and $C^r_n(x)>0$ for $x\in(0,1)$ and
$r>0$. Therefore, we have proved that for $n\ge2,$ $N^r_n(x)<0$ for $x\in(0,1)$ and $r>0$, as we wanted to see. The case $n=1$ is much easier because
we do not need to compute the discriminants and we omit the details.

\begin{table}[h]
\begin{tabular}{|c|c|c|c|c|c|c|c|c|c|}
\hline
$n$ & 2 & 3 & 4 & 5 & 6 & 7 & 8 & 9 & 10 \\
\hline
$b_n$ & 40 & 212 & 624 & 1480 & 2900 & 5028 & 8260 & 12560 & 18180 \\
$c_n$ & 24 & 100 & 264 & 584 & 1100 & 1860 & 2996 & 4496 & 6444 \\
\hline
\end{tabular}
\vspace{0.3 cm}
\begin{caption}
{Degrees of the numerator and the denominator of the resultants computed in the proof of Proposition~\ref{prop:3.4}.}
\end{caption}\label{ta:1}
\end{table}
Finally for $k=2,3,\ldots,10$ we obtain by a direct computation
\[
R^r_{k}(x)-R^r_{k-1}(x)=\frac{x^{2k}(x-1)\,r^{4k-1}D_k(r^2)}{C^r_k(x)C_{k-1}^r(x)},
\]
where $D_k(x)$ is a polynomial with positive coefficients. Hence
$R^r_{k}(x)<R^r_{k-1}(x)$ as we wanted to show.
\end{proof}

As a corollary of Propositions~\ref{prop:3.2} and \ref{prop:3.4} we obtain the following
result:
\begin{theorem}\label{thm:3.5}
Let $\Gamma^r=\{(x,\gamma^r(x)),\, x\in[0,1]\}, $ be a parametrization of the
heteroclinic solution of system~\eqref{eq:8}. For $x\in(0,1)$ and $r\in(0,\sqrt{2}-1]$ it holds that
\[
h^r_{2}(x)< h^r_{3}(x)<\cdots<
h^r_{100}(x)<\gamma^r(x)<R^r_{10}(x)<R^r_9(x)<\cdots<R^r_1(x).
\]
\end{theorem}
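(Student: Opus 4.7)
The plan is to simply concatenate the two chains of inequalities established in Propositions \ref{prop:3.2} and \ref{prop:3.4}. Both propositions apply on the same domain $x \in (0,1)$, $r \in (0,\sqrt{2}-1]$, and both involve the same function $\gamma^r(x)$ in their statements, so the conclusion is an immediate corollary once one verifies that the ranges of indices match those in the statement of the theorem (namely $2 \le n \le 100$ on the left and $1 \le m \le 10$ on the right).

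More precisely, Proposition \ref{prop:3.2} yields the left-hand chain
\[
h^r_{2}(x) < h^r_{3}(x) < \cdots < h^r_{100}(x) < \gamma^r(x),
\]
by showing that each truncated Taylor polynomial $h^r_n$ of the unstable manifold of the saddle is negative on $(0,1]$ and that the flow of $Z^r$ crosses $y = h^r_n(x)$ upwards, so the separatrix lies strictly above each such curve; monotonicity of the $h^r_n$ then follows from the positivity of the Taylor coefficients $h_n$ established in Lemma \ref{lem:3.1}. Proposition \ref{prop:3.4} yields the right-hand chain
\[
\gamma^r(x) < R^r_{10}(x) < R^r_9(x) < \cdots < R^r_1(x),
\]
by checking that the Padé-based curves $y = R^r_n(x)$ are crossed downwards by the flow (via the sign analysis of $B^r_n$ and $C^r_n$ using discriminants and Sturm's algorithm), together with an explicit computation of the differences $R^r_k - R^r_{k-1}$ to certify monotonicity.

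Since both chains meet at the central term $\gamma^r(x)$ and are valid on the common range of $x$ and $r$, the only step is to glue them at that term. The substantive work all sits in the two preceding propositions; at the level of Theorem \ref{thm:3.5} there is no additional obstacle, and hence no further computation to carry out.
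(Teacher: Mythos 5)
Your proposal is correct and matches the paper exactly: the paper states Theorem~\ref{thm:3.5} as an immediate corollary of Propositions~\ref{prop:3.2} and \ref{prop:3.4}, obtained by concatenating the two chains of inequalities at the common term $\gamma^r(x)$. No further argument is needed, and your summary of where the substantive work lies is accurate.
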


If we were interested in obtaining more precise upper
bounds for the heteroclinic orbit, we could simply increase $n$ and apply the same
procedure. For instance for $r=1/10$ we have performed all the computations and
proved that for $x\in(0,1)$,
\begin{align*}
&h^{1/10}_{42}(x)< \gamma^{1/10}(x)<R^{1/10}_{20}(x)&\mbox{and}\qquad
R^{1/10}_{20}(x)-h^{1/10}_{42}(x)<9\times 10^{-32},\\
&h^{1/10}_{62}(x)< \gamma^{1/10}(x)<R^{1/10}_{30}(x)&\mbox{and}\qquad
R^{1/10}_{30}(x)-h^{1/10}_{62}(x)<2\times 10^{-40},\\
&h^{1/10}_{82}(x)< \gamma^{1/10}(x)<R^{1/10}_{40}(x)&\mbox{and}\qquad
R^{1/10}_{40}(x)-h^{1/10}_{82}(x)<2\times 10^{-47},
\end{align*}
where the functions $R_m^r(x),$ $m>10,$ are defined similarly to the ones given in
the above theorem. We remark that in all the cases the maximum error is at $x=1.$ It is also important to notice that for bigger values of $r$ we need bigger values of $n$ in $R^r_n(x)$ and $h^r_n(x)$ to obtain similar bounds.

Theorem~\ref{thm:1.2} is simply a reformulation of the above theorem.

\section{On the time-parametrization of $\Gamma_c$ for the\\ Fisher-Kolmogorov
case}\label{se:4}

By using normal forms theory it is well-known that in a neighborhood of the
origin of system~\eqref{eq:8} its unstable manifold can be parametrized as
$F(e^{rt})$ for some analytic function $F$. So it seems natural to find bounds of the actual heteroclinic orbit which are rational functions of $e^{rt}$. As far as we know this idea is new.

First, we consider the family of rational functions
\[
X(t)=\dfrac{\beta e^{rt} +\alpha e^{2rt}}{1+(\alpha+2\beta-1)e^{rt}+\alpha e^{2rt}}
\]
to try to approximate the function $x^r(t)$, where we denote by $(x^r(t),y^r(t))$
the time-parametrization of the heteroclinic orbit $\Gamma^r$. Notice that the
parameters are taken in such a way that
\[
\lim_{t\to-\infty} X(t)=0,\quad X(0)=\dfrac12,\quad \lim_{t\to\infty} X(t)=1,
\]
properties that are also satisfied by $x^r(t)$. We do not care about the $y$
component because $y(t)=-x'(t)$.

Recall that the vector field associated to system~\eqref{eq:8} is
\[Z^r(x,y)=(Z^r_1(x,y),Z^r_2(x,y))=(-y, (r-1/r)y +x(x-1)).\]
To study the behavior of the flow of this system on the curve $(X(t),Y(t)),$
where $Y(t)=-X'(t)$ we compute
\begin{equation}\label{eq:10}
M(t):=-Y'(t)Z^r_1(X(t),Y(t))+X'(t)Z^r_2(X(t),Y(t)).
\end{equation}
If we introduce the compact notation $\Phi:=e^{rt}$ we obtain that
\[
M(t)=\dfrac{\Phi^3\left(\alpha(\alpha+\beta-1)\Phi^2+2\alpha
\Phi+\beta\right)P_3(\Phi)} {(\alpha\Phi^2+(\alpha+2\beta-1)\Phi+1)^5},
\]
where $P_3$ is a polynomial of degree 3 with coefficients depending also
polynomially on $\alpha$ and $\beta$.

In order to simplify the expression of $M(t)$ we take $\alpha=1-\beta$. Then
\[
M(t)=\dfrac{\Phi^3\left(2(1-\beta)\Phi+\beta\right)P_3(\Phi)}
{((1-\beta)\Phi^2+\beta\Phi+1)^5}.
\]
Finally, taking $\beta=2\sqrt2-2,$ the numerator and the denominator of the above
fraction have a common zero. Then it writes as
\[
M(t)=\dfrac{2(17+12\sqrt2)r(1-6r^2)\Phi^3}{(1+\sqrt2+\Phi)^7}.
\]
In short we have proved the following:

\begin{theorem}\label{thm:4.1}
Let $(x^r(t),y^r(t))$ be the time-parametrization of the heteroclinic orbit
$\Gamma^r$ of system~\eqref{eq:8} that satisfies $x^r(0)=1/2$.
Define
\[
X^r(t)=\dfrac{(2+2\sqrt2+ e^{rt})e^{rt}}{(1+\sqrt2+ e^{rt})^2}.
\]
Then:\begin{enumerate}[(i)]
\item When $r<1/\sqrt{6}$ it holds that
$\operatorname{sgn}(x^r(t)-X^r(t))=-\operatorname{sgn}(t).$
\item When $r=1/\sqrt{6}$ it holds that $x^r(t)=X^r(t).$
\item When $r>1/\sqrt{6}$ it holds that
$\operatorname{sgn}(x^r(t)-X^r(t))=\operatorname{sgn}(t).$
\end{enumerate}
\end{theorem}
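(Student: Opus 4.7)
The plan is to interpret the already computed quantity $M(t)$ as a transversality indicator between the flow of system~\eqref{eq:8} and the parametrized curve $\mathcal{C}^r := \{(X^r(t), Y^r(t)) : t \in \mathbb{R}\}$, where $Y^r(t) := -(X^r)'(t)$. Direct inspection gives $(X^r)'(t) > 0$ for all $t$, $X^r(\pm\infty) \in \{0,1\}$ and $X^r(0) = 1/2$, so $\mathcal{C}^r$ is the graph of a smooth function $y = \eta^r(x)$ on $[0,1]$ lying in the strip $(0,1) \times (-\infty,0)$ and joining the two equilibria. With $G(x,y) := y - \eta^r(x)$, the identity $\langle \nabla G, Z^r\rangle|_{\mathcal{C}^r} = M(t)/(X^r)'(t)$ shows that $\operatorname{sgn}(\dot G|_{\mathrm{flow}}) = \operatorname{sgn}(M) = \operatorname{sgn}(1-6r^2)$ uniformly along $\mathcal{C}^r$.

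Case~(ii) is immediate: when $r = 1/\sqrt{6}$, $M \equiv 0$ forces $\mathcal{C}^r$ to be flow-invariant, its $\alpha$- and $\omega$-limits are the two equilibria, and uniqueness of the heteroclinic together with $X^r(0) = 1/2$ gives $X^r(t) = x^r(t)$ identically. For the remaining cases I first prove the phase-plane inequality $\operatorname{sgn}(\eta^r - \gamma^r) = \operatorname{sgn}(6r^2 - 1)$ on $(0,1)$. Eliminating $\Phi = e^{rt}$ yields the closed form $\eta^r(x) = -2r(1-\sqrt{1-x})(1-x)$, whose Taylor expansion at $x = 0$, compared against Lemma~\ref{lem:3.1}, produces
\[
\eta^r(x) - \gamma^r(x) = \frac{r(6r^2-1)}{4(2r^2+1)}\,x^2 + O(x^3),
\]
settling the sign near the saddle. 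To exclude an interior intersection, at any hypothetical common point $p = (x_0, \gamma^r(x_0))$ with $x_0 \in (0,1)$ one has
\[
M(t_0) = (X^r)'(t_0)\,Z_1^r(p)\,\bigl((\gamma^r)'(x_0) - (\eta^r)'(x_0)\bigr),
\]
using $Z_2^r(p) = (\gamma^r)'(x_0)\,Z_1^r(p)$ (since $\gamma^r$ is a trajectory) and $(Y^r)'(t_0) = (\eta^r)'(x_0)(X^r)'(t_0)$; with $(X^r)' > 0$ and $Z_1^r(p) = -\gamma^r(x_0) > 0$ this forces the slope-difference to carry the sign of $M$, whereas the geometry of a transverse crossing (changing $\operatorname{sgn}(\gamma^r - \eta^r)$ from $\operatorname{sgn}(1-6r^2)$ to its opposite) demands the reverse slope inequality, and a tangential crossing would yield $M(t_0) = 0$. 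Both possibilities contradict $M$ having constant nonzero sign on $(0,1)$.

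A standard ODE comparison completes the argument. Both $x^r$ and $X^r$ are the unique monotone increasing solutions of $\dot u = -\gamma^r(u)$ and $\dot u = -\eta^r(u)$ respectively, with $u(0) = 1/2$. In case~(iii), $\eta^r > \gamma^r$ on $(0,1)$ gives $-\eta^r < -\gamma^r$ pointwise, and a first-meeting-time argument — at any $t_* > 0$ with $x^r(t_*) = X^r(t_*) = u_*$, $(x^r - X^r)'(t_*) = \eta^r(u_*) - \gamma^r(u_*) > 0$, which contradicts $x^r - X^r$ reaching zero from above at $t_*$ — yields $\operatorname{sgn}(x^r(t) - X^r(t)) = \operatorname{sgn}(t)$; reversed inequalities handle case~(i). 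The main obstacle is the no-crossing step, whose delicate point is the slope form of $M$ at a common point of $\gamma^r$ and $\eta^r$; as a sanity check one verifies the consistent sign near $x = 1$ using the node eigenvalue $\lambda_+ = (-c + \sqrt{c^2-4})/2$, obtaining $\gamma^r(x) - \eta^r(x) \sim (\lambda_+ + 2r)(1 - x)$ with $\operatorname{sgn}(\lambda_+ + 2r) = \operatorname{sgn}(1 - 6r^2)$.
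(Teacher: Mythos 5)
Your proposal is correct and follows essentially the paper's own route: the proof in the paper consists precisely of computing the transversality function $M(t)$ of \eqref{eq:10} along the curve $(X^r(t),-X^r{}'(t))$ and observing that its sign is that of $1-6r^2$. Your additional steps (the closed form $\eta^r(x)=-2r(1-\sqrt{1-x})(1-x)$, the second-order comparison with Lemma~\ref{lem:3.1} near the saddle, the no-crossing argument at a hypothetical intersection, and the scalar ODE comparison in $t$) are accurate fillings-in of the deduction the paper leaves implicit, so no genuinely different method is involved.
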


Theorem~\ref{thm:1.3} follows from the above result, simply using that $u=1-x$ and
$c=1/r-r$.

Theorem~\ref{thm:3.5} can also be used to obtain an explicit
bound for $x^r(t).$ Since $h_2(x)=-rx+\dfrac r{2r^2+1}x^2<\gamma^r(x)$, where
$(x,\gamma^r(x))$ is the parametrization of $\Gamma^r$, solving the Cauchy
problem
\begin{equation*}
x'=rx-\dfrac r{2r^2+1}x^2, \quad x(0)=\frac12,
\end{equation*}
we obtain a function
\[
U^r(t)=\frac{2r^2+1}{1+(4r^2+1)e^{-rt}},
\]
such that $\operatorname{sgn}(x^r(t)-U^r(t))=-\operatorname{sgn}(t).$
The other bounds given in Theorem~\ref{thm:3.5} give rise to
implicit bounds of the form $H(t,x)=0$ of the curve $(t,x^r(t))$.

\subsection{Sharper bounds when $\boldsymbol{r\in(0,1/\sqrt{6})}$}\label{se:4.1}
This subsection improves the results of the previous section when $r<1/\sqrt{6}$.

Fixed $r\in(0,1/\sqrt{6})$ and given any natural $n$, following similar techniques
that the ones used to prove Lemma~\ref{lem:3.1}, we can compute a function
$x^r_{n}(t)=\sum_{j=1}^{2n}a_j \Phi^j$, with $\Phi=e^{rt}$ and $a_1=1$, that
coincides with the solution $x^r(t)$ until order $2n$ in $\Phi$. For instance
\[
a_2= -\frac{1}{2r^2+1} \quad\mbox{and}\quad a_3= \frac{1}{(2r^2+1)(3r^2+1)}.
\]
Looking $\Phi$ as an independent variable, we compute the associated Pad\'e
approximant of $\sum_{j=1}^{2n}a_j \Phi^j$ of order $(n,n)$, obtaining
\[
Z_n^r(t)=\widetilde{Z}_n^r(\Phi)=\frac{\sum\limits_{j=1}^{n} b_j(r)
\Phi^{j}}{\sum\limits_{j=0}^{n} c_j(r) \Phi^{j}},
\]
where $b_j(r)$ and $c_j(r)$ are polynomials on $r.$ For instance
\[
\widetilde{Z}_2^r(\Phi)=\frac{3 (2 r^2+1) (3 r^2+1) (4 r^2+1) \Phi-2 (r-1) (r+1)
(3 r^2+1) \Phi^2}{3 (2 r^2+1) (3 r^2+1) (4 r^2+1)+5 (2 r^2+1) (3 r^2+1) \Phi+(3
r^2+2) \Phi^2}.
\]

Notice that it satisfies that $\lim\limits_{t\rightarrow -\infty} Z_n^r(t)=0$ but
we do not impose neither that $Z_n^r(0)=1/2$ nor that $\lim\limits_{t\rightarrow
\infty} Z_n^r(t)=1.$

As in the proof of Theorem~\ref{thm:4.1}, to study the behavior of the flow
associated to system~\eqref{eq:8} on the curve $(Z_n^r(t), -Z_n^r(t)')$ we
compute the corresponding function~\eqref{eq:10}. It writes as
\[
M_n(t)=\widetilde{M}_n(\Phi)=\dfrac{\Phi^{2n+2}r^3(1-6r^2)P_{3n-3}(\Phi)
}{(Q_{n}(\Phi))^5},
\]
where $P_k$ and $Q_k$ are polynomials of degree $k$ with coefficients depending
also polynomially on $r$. It can be seen that all these coefficients,
as functions of $r$, take positive values on $(0,1/\sqrt{6}).$ We prove this fact
introducing a new variable $z$ satisfying $r=z^2/(\sqrt{6}(1+z^2))$ in each of the
coefficients and then applying the Descartes' Theorem to the resulting
polynomials. Then $M_n(t)>0$ for all $t$ and for all $r$ in $(0,1/\sqrt{6}).$
Moreover, this also shows that $Z_n^r(t)$ is well defined because
$Q_n(\Phi)=\sum\limits_{j=0}^{n} c_j(r) \Phi^{j}>0.$

Since we want that $Z_n^r(0)=1/2$ we need to modify $Z_n^r(t).$ To do this, for
$\rho>0,$ we define the new family of functions
$W_n^r(t,\rho)=\widetilde{Z}_n^r(\rho\Phi).$ It is not difficult to see that
following the above procedure the same results hold. Moreover it can be proved
that there exists a unique value $\rho_0(r)\in(0,1)$ such that
$W_n^r(0,\rho_0(r))=1/2.$ So we define $X_n^r(t)=W_n^r(t,\rho_0(r))$ and it
satisfies $X_n^r(0)=1/2.$ Since the corresponding function~\eqref{eq:10} is positive,
for each $n=2,\ldots,8,$ it holds that
\[
\operatorname{sgn}(x^r(t)-X^r(t))=-\operatorname{sgn}(t).
\]
Moreover $\lim\limits_{t\rightarrow \infty} X_n^r(t)=b_n(r)/c_n(r)$ and it can be
seen that these limits, which are rational functions in $r$, satisfy
\[
1< b_8(r)/c_8(r)<b_7(r)/c_7(r)<\ldots <b_2(r)/c_2(r).
\]
Furthermore
\[
E_k:=\max_{r\in(0,1/\sqrt{6})}\left(\frac{b_k(r)}{c_k(r)}-1\right)
\]
decreases with $k$. For instance $E_2=3-4\sqrt{5}/3\approx 0.02$ and $E_8\approx
0.0007$.

\section*{Conclusions} In spite of the great interest of studying the
traveling wave solutions of reaction-diffusion equations, $u_t=u_{xx}+ f(u),$
there are no methods for obtaining explicit bounds for them. In this paper we
present an approach that allows to obtain this type of bounds in the general case.
Moreover we introduce more elaborated tools, based on the control of the
heteroclinic trajectories of an associated planar ordinary differential equation,
that allow to improve the general results when we deal with a particular function
$f$. We study with detail the classical Fisher-Kolmogorov case $f(u)=u(1-u).$ The
methods developed can be easily adapted to treat the Newell-Whitehead-Segel and
the Zeldovich equations

\end{document}